\newtheorem{definition}{\bf Definition}[section]
\newtheorem{theorem}[definition]{\bf Theorem}
\newtheorem{lemma}[definition]{\bf Lemma}
\begin{document}
\title{ Bounds for global solutions of a reaction diffusion system\\ with the Robin boundary conditions\\[3mm]}
\author{{\large Kosuke Kita}\\[3mm]
	Graduate School of Advanced Science and Engineering, \\ Waseda University, 3-4-1 Okubo Shinjuku-ku, Tokyo, 169-8555, JAPAN\\[5mm]
		{\large Mitsuharu \^{O}tani}\\[3mm]
	Department of Applied Physics, School of Science and Engineering, \\ Waseda University, 3-4-1 Okubo Shinjuku-ku, Tokyo, 169-8555, JAPAN\\[5mm]}
\date{}
%
%
%
\footnotetext[1]{
2010 {\it Mathematics Subject Classification.} Primary: %
35K57; 
Secondary: %
35B40, 
35B45. 
\\
Keywords: initial-boundary problem, reaction diffusion system, a priori estimate, global solution.
}
\footnotetext[2]{Partly supported by the Grant-in-Aid 
	for Scientific Research, \# 15K13451 and \# 18K03382,
	the Ministry of Education, Culture, Sports, Science and Technology, Japan.}
\footnotetext[3]{e-mail : kou5619@asagi.waseda.jp}
\maketitle
\noindent
{\bf Abstract.}
In this paper, we are concerned with the large-time behavior of solutions of a reaction diffusion system arising from a nuclear reactor model with the Robin boundary conditions.
It is shown that global solutions of this system are uniformly bounded in a suitable norm with respect to time. \\


\section{Introduction}

\hspace{\parindent}We consider the asymptotic behavior of global solutions of the initial boundary value problem for a reaction diffusion system : 
\begin{equation}
\label{1.1}
\left\{
\begin{aligned}
& \partial_t u_1-\Delta u_1 = u_1u_2 - bu_1, &&\hspace{7mm} t>0,~x\in\Omega,\\
& \partial_t u_2-\Delta u_2 = au_1, &&\hspace{7mm} t>0,~x\in\Omega,\\
& \partial_{\nu}u_1 + \alpha u_1 = \partial_{\nu}u_2 + \beta u_2 = 0, &&\hspace{7mm} t>0,~x\in\partial\Omega,\\
& u_1(0,x)=u_{10}(x)\ge0,~u_2(0,x)=u_{20}(x)\ge0, &&\hspace{7mm}x\in\Omega.
\end{aligned}
\right.
\end{equation}
Here \(\Omega\) is a bounded domain in \(\mathbb{R}^N\) with smooth boundary \(\partial\Omega\), and \(\nu\) denotes the unit outward normal vector on \(\partial\Omega\). 
Furthermore \(u_1\), \(u_2\) are real-valued unknown functions and \(a\), \(b\) are given positive constants.
We also assume \(\alpha\ge 0\) and \(\beta>0\).
This problem is introduced in 1968 by Kastenberg and Chambr\'e \(\cite{KC1}\) for the purpose to give mathematical model of a nuclear reactor, where \(u_1\) represents the neutron flux and \(u_2\) represents the fuel temperature.

This model is studied by many authors under various (linear) boundary conditions, see, e.g., \(\cite{C1}\), \(\cite{C2}\), \(\cite{GW1}\), \(\cite{GW2}\), \(\cite{JLZ}\), \(\cite{R}\) and \(\cite{Y}\). 
They investigated the existence of positive steady-state solutions and the asymptotic behavior of solutions.
In our previous work \(\cite{KO}\), we also studied the initial-boundary value problem for this system with nonlinear boundary conditions:
\begin{equation}
\label{1.2}
\left\{
\begin{aligned}
& \partial_t u_1-\Delta u_1 = u_1u_2 - bu_1, &&\hspace{7mm} t>0,~x\in\Omega,\\
& \partial_t u_2-\Delta u_2 = au_1, &&\hspace{7mm} t>0,~x\in\Omega,\\
& \partial_{\nu}u_1 + \alpha u_1 = \partial_{\nu}u_2 + \beta |u_2|^{\gamma-2}u_2 = 0, &&\hspace{7mm} t>0,~x\in\partial\Omega,\\
& u_1(0,x)=u_{10}(x)\ge0,~u_2(0,x)=u_{20}(x)\ge0, &&\hspace{7mm}x\in\Omega,
\end{aligned}
\right.
\end{equation}
where \(\gamma\ge 2\).
We showed the existence and the ordered uniqueness of positive stationary solution for \(N\in[1,5]\).
For nonstationary problem, we proved that any positive stationary solution plays a role of threshold to separate global solutions and finite time blowing-up solutions.
More precisely, if the initial data is less than or equal to positive stationary solutions, then solutions of \(\eqref{1.2}\) exists globally and tends to zero as \(t\to\infty\), 
and if the initial data is strictly larger than positive stationary solutions, then 
solutions of \(\eqref{1.2}\) blow up in finite time.
For general initial data, however, this result does not say anything about the asymptotic behavior of global solutions.
When we assume that solutions exist globally, it is natural to ask whether global solutions blow up at \(\infty\) or not. 
We here restrict ourselves to the case where \(\gamma=2\), for the technical reason.
Bounds for global solutions of this system with the homogeneous Dirichlet boundary conditions is already studied by Quittner \(\cite{Q1}\) for the case where \(N=2\).
This strong restriction on \(N\) arises from applying Hardy type inequality (see \(\cite{BT}\)). 
As for the Robin boundary conditions, by making use of the good properties of the first eigenfunction of Laplacian with Robin boundary conditions, we can discuss the case where \(N=2,3\). 

This kind of problem is well known for the scalar problem :
\begin{equation}
\label{1.3}
\left\{
\begin{aligned}
& \partial_t u(t,x)-\Delta u(t,x) = f(u(t,x)), &&\hspace{7mm} t>0,~x\in\Omega,\\
& u(t,x) = 0, &&\hspace{7mm} t>0,~x\in\partial\Omega,\\
& u(0,x)=u_{0}(x), && \hspace{7mm}x\in\Omega.
\end{aligned}
\right.
\end{equation}
For simplicity, assume that \(f(u)=|u|^{p-2}u\) and \(p\) is Sobolev subcritical, that is, \(p\in(2,p_S)\), where
\(p_S\) is the Sobolev critical exponent defined by \(p_S=\infty\) for \(N=1,2\) ; \(p_S=\frac{2N}{N-2}\) for \(N=3\).
The boundedness of global solutions of \eqref{1.3} was first discussed by \(\cite{O1,O2}\) in the abstract setting of the form \( u_t + \partial\varphi^1(u)-\partial\varphi^2(u)=0\) in \(L^2(\Omega)\).
Here \(\partial\varphi^i\) are subdifferentials of lower semi-continuous convex and homogeneous functionals \(\varphi^i\) (\(i=1,2\)) on \(L^2(\Omega)\), where 
 it is shown that every global solution of \(\eqref{1.3}\) is uniformly bounded in \(H^1_0(\Omega)\) with respect to time.
Ni-Sacks-Tavantzis \(\cite{NST}\) studied \(\eqref{1.3}\) for the case where \(\Omega\) is convex domain and proved every positive global solution of \(\eqref{1.3}\) is uniformly bounded in \(L^{\infty}(\Omega)\) with respect to time provided that \(p\in(2,2+\frac{2}{N})\).
Furthermore they also showed that if \(p\ge p_S\), then \(\eqref{1.3}\) has a global solution which \(L^{\infty}\) norm goes to \(\infty\) as \(t\to\infty\) in the case where \(N\ge3\).
Cazenave-Lions \(\cite{CL}\) dealt with more general nonlinear term \(f(u)\) (including \(f(u)=|u|^{p-2}u\)) and showed that every global solution allowing sing-changed solution is bounded in \(L^{\infty}(\Omega)\) uniformly in time provided that \(p\in(2,p_{CL})\), where \(p_{CL}=\infty\) when \(N=1\) ; \(p_{CL} = 2  +\frac{12}{3N-4}\) when \(N\ge2\). 
( Note that \(p_{CL}\le p_S\) for any \(N\in\mathbb{N}\) ).
Giga removed this restriction on \(p\) in his paper \(\cite{G1}\) for positive global solutions, that is, he showed every positive global solution of \(\eqref{1.3}\) is uniformly bounded in \(L^{\infty}(\Omega)\) for any \(p\in(2,p_S)\).
 Quittner \(\cite{Q2}\) removed the restriction of the positivity of solutions, i.e., 
he proved that every global solution of \(\eqref{1.3}\) (allowing  sing-changed solution) is uniformly bounded in \(L^{\infty}(\Omega)\) for any \(p\in(2,p_S)\).
 
Proofs for the boundedness of global solutions of \(\eqref{1.3}\) deeply rely on  
 the fact that the energy functional \(E(u)\), defined by \(E(u)=\frac{1}{2}\int_{\Omega}|\nabla u|^2dx- \frac{1}{p}\int_{\Omega}|u|^pdx\), becomes a Lyapunov function, 
in other words, \eqref{1.3} possesses the variational structure.
In addition to that, in \(\cite{G1}\) the rescaling argument is introduced and 
in \(\cite{Q2}\) the bootstrap argument based on the interpolation and the maximal regularity
 is used. 

Unfortunately for our system, we can not apply the arguments similar to those of \(\cite{G1}\) and \(\cite{Q2}\), since \(\eqref{1.1}\) does not possess the variational structure.
 
 To cope with this difficulty, making much use of the special form of our system, 
 we first show the uniform bound for the $L^1$-norm with the positive weight 
 $\varphi_1$, the first eigenfunction of the Laplace operator with 
 the Robin boundary condition.  To derive the uniform $H^1$-bound, we rely on some energy method 
 with a special device ( see Lemma ). Furthermore by applying  
  Moser's iteration scheme such as in Nakao \cite{Na1}, we derive the uniform $L^\infty$-bound 
via $H^1$-bound. 
\section{Existence of local solutions}
\hspace{\parindent} 
Throughout this paper, we denote by \(\|\cdot\|_p\) and \(\|\cdot\|\) the norm in \(L^p(\Omega)\) \((1\le p\le\infty)\) and \(H^1(\Omega)\) respectively. 
We also simply write \(u(t)\) instead of \(u(t,\cdot)\).
 In this section, we prepare a couple of results concerning the local well-posedness. 
 The following result is proved in \cite{KO} as Theorem 3.1.
\begin{theorem}\label{LWPinfty}
	Let \((u_{10},u_{20}) \in L^{\infty}(\Omega )\times L^{\infty}(\Omega)\),  
then there exists  \(T = T(\|u_{10}\|_\infty, \|u_{20}\|_\infty)>0\) such that \eqref{1.2} possesses a unique solution 
\((u_1,u_2) \in( L^{\infty}(0,T;L^{\infty}(\Omega))\cap C([0,T];L^2(\Omega)))^2\) satisfying
	\begin{equation}\label{reg:Linfty}
		\sqrt{t}\partial_tu_1, \sqrt{t}\partial_tu_2, \sqrt{t}\Delta u_1, 
           \sqrt{t}\Delta u_2\in L^2(0,T;L^2(\Omega)).
	\end{equation}
	Furthermore, if the initial data is nonnegative, then the local solution \((u_1,u_2)\) for 
\eqref{1.2} is nonnegative.
\end{theorem}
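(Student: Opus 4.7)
Since the statement is cited as Theorem 3.1 of \cite{KO}, I would simply refer to that reference; but if asked to reconstruct the proof, my plan would be the following.

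The plan is to recast \eqref{1.2} as an abstract evolution system in $L^2(\Omega)\times L^2(\Omega)$ of the form $\partial_t u_i +\partial\varphi_i(u_i)= F_i(u_1,u_2)$ ($i=1,2$), where
\[
\varphi_1(v)=\tfrac{1}{2}\int_\Omega|\nabla v|^2\,dx+\tfrac{\alpha}{2}\int_{\partial\Omega}v^2\,dS,\qquad
\varphi_2(v)=\tfrac{1}{2}\int_\Omega|\nabla v|^2\,dx+\tfrac{\beta}{\gamma}\int_{\partial\Omega}|v|^\gamma\,dS,
\]
so that the subdifferentials $\partial\varphi_i$ encode the Laplacian together with the (possibly nonlinear, for $i=2$) Robin boundary conditions; the right--hand sides are $F_1=u_1u_2-bu_1$ and $F_2=au_1$.

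First I would handle the fact that $F_1$ is only locally Lipschitz by truncating: set $F_1^M(u_1,u_2)=T_M(u_1)T_M(u_2)-bu_1$, where $T_M$ is a smooth cut--off at height $M:=2\max(\|u_{10}\|_\infty,\|u_{20}\|_\infty)+1$. For this truncated system, $F^M$ is globally Lipschitz from $L^2\times L^2$ into itself, so the existence theory for evolution equations governed by proper, lower semi-continuous, convex functionals in Hilbert spaces (Brezis, \^{O}tani) applies and yields a unique strong solution $(u_1,u_2)\in C([0,\infty);L^2(\Omega))^2$ with the smoothing regularity $\sqrt{t}\,\partial_t u_i,\sqrt{t}\,\Delta u_i\in L^2_{\mathrm{loc}}(L^2)$ coming from Brezis' $t^{1/2}$-estimate for subdifferential flows starting in $D(\varphi_i)^{L^2}=L^2(\Omega)$.

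Next I would show that on a sufficiently short interval $[0,T]$, with $T=T(\|u_{10}\|_\infty,\|u_{20}\|_\infty)$, one has $\|u_i(t)\|_\infty\le M$, so that the truncation is inactive and the solution of the truncated problem coincides with a solution of \eqref{1.2}. Testing the $u_2$-equation by $u_2|u_2|^{p-2}$ and using that the boundary term has a good sign gives $\tfrac{d}{dt}\|u_2\|_p^p\le ap\,\|u_1\|_p\|u_2\|_p^{p-1}$; similarly for $u_1$ using $F_1\le u_1u_2$, one gets $\tfrac{d}{dt}\|u_1\|_p^p\le p\,\|u_2\|_\infty\|u_1\|_p^p$. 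Passing $p\to\infty$ yields a closed differential inequality for $\|u_1\|_\infty+\|u_2\|_\infty$ that is bounded by $M$ on a time $T$ depending only on $\|u_{10}\|_\infty,\|u_{20}\|_\infty$. (Alternatively, a Moser iteration as in \cite{Na1} could be used.)

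Finally, nonnegativity is obtained by testing the equations with the negative parts $u_i^-=(-u_i)_+$: because $\partial\varphi_i$ are subdifferentials of functionals that are nonincreasing under truncation at $0$ (the Robin boundary terms verify $\int_{\partial\Omega}u_i u_i^-\,dS\ge 0$ in the relevant sense), and because $F_1,F_2\ge 0$ whenever $u_1,u_2\ge 0$ (after adding and subtracting a multiple of $u_1$ to absorb the $-bu_1$ term), a Gronwall argument gives $\|u_i^-(t)\|_2\equiv 0$. Uniqueness is proved by taking the $L^2$-difference of two solutions, using the monotonicity of $\partial\varphi_i$ and the local Lipschitz estimate $|F_1(u)-F_1(v)|\le C(M)|u-v|$ on the interval where both stay bounded in $L^\infty$.

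The main obstacle is the passage from the $L^2$-valued theory provided by the subdifferential framework to an $L^\infty$-valued local existence statement with existence time depending only on $\|u_{i0}\|_\infty$; this is exactly what the truncation together with the $L^p$-estimate above is designed to resolve.
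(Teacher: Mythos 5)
The paper offers no proof of this statement at all: it is imported verbatim from \cite{KO} (quoted there as Theorem 3.1), so your primary move of simply citing that reference is exactly what the paper does. Your reconstruction sketch (subdifferential formulation of the Robin/nonlinear-boundary Laplacians, truncation of $u_1u_2$ to restore global Lipschitz continuity, Br\'ezis-type $\sqrt{t}$-smoothing from $L^2$ data, $L^p$-estimates uniform in $p$ to deactivate the truncation, and negative-part testing for nonnegativity) is moreover fully consistent with the abstract framework the paper itself uses for the $H^1$ counterpart, Theorem \ref{LWPH1}, via \^{O}tani's Theorem II of \cite{O3}.
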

  In order to treat the case where the data belong to $H^1(\Omega)$, 
we need to fix some abstract setting. Let $ H := L^2(\Omega) \times L^2(\Omega)$ 
and for $u=(u_1, u_2) \in H$ we put 
\begin{equation*}
  \begin{split}
  & D(\phi) := \{ ~\! u \in H ~\! ; ~\! u_1, u_2 \in H^1(\Omega), \ 
                        u_2 \in L^\gamma(\partial \Omega) ~\! \}, 
 \\[2mm]
  & \phi(u) = \begin{cases} \ 
               \displaystyle\frac{1}{2} \int_\Omega \!\!
                  ( |\nabla u_1(x)|^2 \! + b  |u_1(x)|^2 \! + |\nabla u_2(x)|^2 )  dx 
                       + \!\! \int_{\partial \Omega} \!\! \left(  \frac{\alpha}{2}|u_1(x)|^2 +
                                 \frac{\beta}{\gamma} |u_2(x)|^\gamma \right)  d\sigma 
               & \text{if} \ u \in D(\phi),
   \\[2mm]
              \ + \infty  &  \text{if} \ u \not\in D(\phi).
             \end{cases}
  \end{split}
\end{equation*}
   Then  $\phi$ is a lower semi-continuous convex function from $H$ into $[0,\infty)$ and 
its subdifferential $\partial \phi$ is given by
\begin{align*}
 & \partial \phi(u) = \{ ~\! w \in H ~\! ; ~\! w = ( - \Delta u_1 + b u_1, - \Delta u_2 ) ~\! \} 
     \quad \forall u \in D( \partial \phi), 
\\[4mm] 
  & D(\partial \phi) = \{ ~\! u = (u_1, u_2) ~\! ; ~\! u_1, u_2  \in H^2(\Omega), \   
                   \partial_{\nu}u_1 + \alpha u_1 
                     = \partial_{\nu}u_2 + \beta ~\! |u_2|^{\gamma-2}u_2 = 0 ~\! \}.             
\end{align*}
 Then we have
\begin{theorem}\label{LWPH1}
	Let $N \leq 5$ and  $(u_{10},u_{20}) \in D(\phi)$,  
then there exists  \(T = T( \phi(u_0)) >0\) such that \eqref{1.2} possesses a unique solution 
\((u_1,u_2)\in( C([0,T];L^2(\Omega)))^2\) satisfying
	\begin{equation}\label{reg:H1}
		 \partial_tu_1, \partial_tu_2, \Delta u_1, 
           \Delta u_2\in L^2(0,T;L^2(\Omega)).
	\end{equation}
	Furthermore, if the initial data is nonnegative, then the local solution \((u_1,u_2)\) for 
\eqref{1.2} is nonnegative.
\end{theorem}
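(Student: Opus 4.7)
The plan is to cast \eqref{1.2} as an abstract Cauchy problem in $H = L^2(\Omega) \times L^2(\Omega)$, namely
$$\frac{du}{dt} + \partial\phi(u) \ni F(u), \qquad F(u) = (u_1 u_2,\ a u_1),$$
and to combine the local well-posedness in $L^\infty$ from Theorem \ref{LWPinfty} with uniform $H^1$ a priori estimates. I would first approximate $(u_{10}, u_{20}) \in D(\phi)$ by a sequence $(u_{10}^n, u_{20}^n)$ in $L^\infty(\Omega)\times L^\infty(\Omega)$ converging to $(u_{10}, u_{20})$ in the topology of $D(\phi)$, for instance by a standard truncation at level $n$. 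Theorem \ref{LWPinfty} then furnishes local $L^\infty$-solutions $(u_1^n, u_2^n)$ on some interval $[0, T_n]$; the task is to show that $T_n$ can be chosen independently of $n$ and that the limit inherits the regularity \eqref{reg:H1}.

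The key a priori estimate comes from testing the approximate system against $\partial_t u^n$ and invoking the chain rule $\frac{d}{dt}\phi(u^n) = (\partial\phi(u^n), \partial_t u^n)_H$, which yields
$$\|\partial_t u^n\|_H^2 + \frac{d}{dt}\phi(u^n) = (F(u^n), \partial_t u^n)_H.$$
A Cauchy--Schwarz and Young estimate reduces matters to bounding $\|F(u^n)\|_H^2 \le \|u_1^n u_2^n\|_2^2 + a^2\|u_1^n\|_2^2$ by a power of $\phi(u^n)$. When $N \le 4$, the embedding $H^1(\Omega) \hookrightarrow L^4(\Omega)$ gives directly $\|u_1^n u_2^n\|_2 \le \|u_1^n\|_4 \|u_2^n\|_4 \le C\,\phi(u^n)$, and Gronwall's lemma then produces a uniform bound on a short interval $[0,T]$.

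The main obstacle is the case $N = 5$, where $H^1(\Omega) \hookrightarrow L^{10/3}(\Omega)$ does not place $u_1 u_2$ in $L^2(\Omega)$. Here I would exploit the extra $H^2$-regularity coming from $\partial\phi(u^n) \in H$: elliptic regularity for the Robin problem gives $\|u_1^n\|_{H^2} \le C(\|\Delta u_1^n\|_2 + \|u_1^n\|_2)$, so that $\|u_1^n\|_{10} \le C\|u_1^n\|_{H^2}$ by Sobolev; combined with $\|u_2^n\|_{5/2} \le C\|u_2^n\|_{H^1}$, this yields $\|u_1^n u_2^n\|_2 \le C\|u_1^n\|_{H^2}\|u_2^n\|_{H^1}$. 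Using the first equation to express $\Delta u_1^n = \partial_t u_1^n + b u_1^n - u_1^n u_2^n$, the resulting $\|\Delta u_1^n\|_2^2$ contribution can be absorbed into $\|\partial_t u^n\|_H^2$ for small times, producing a differential inequality of the form
$$\frac{d}{dt}\phi(u^n) + \tfrac12 \|\partial_t u^n\|_H^2 \le C\bigl(1 + \phi(u^n)^k\bigr)$$
for some $k \ge 1$ and hence a uniform bound $\phi(u^n(t)) \le M$ on a common interval $[0, T]$ with $T = T(\phi(u_0))$.

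Equipped with these uniform estimates, a standard Aubin--Lions compactness argument lets me extract a subsequence converging to a limit $(u_1, u_2)$ which satisfies \eqref{1.2} with the regularity \eqref{reg:H1}. Uniqueness follows from an energy estimate for the difference of two solutions together with Gronwall's inequality, controlling the quadratic cross-terms via the same Sobolev embeddings. Non-negativity passes to the limit from the approximate sequence via the last assertion of Theorem \ref{LWPinfty}.
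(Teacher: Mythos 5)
Your reduction to an abstract evolution equation in $H$ matches the paper's setup, but where the paper simply verifies the compactness, demiclosedness and growth hypotheses of Theorem II of \cite{O3} and quotes that theorem (which directly produces a solution for data in $D(\phi)$ with $T=T(\phi(u_0))$), you attempt a by-hand construction: truncate the data, solve via Theorem \ref{LWPinfty}, prove uniform $\phi$-estimates, and pass to the limit. That route could in principle work, but as written it has two genuine gaps, and the decisive one is your $N=5$ estimate. The bound $\|u_1^n u_2^n\|_2 \le C\|u_1^n\|_{H^2}\|u_2^n\|_{H^1}$ rests on the endpoint embedding $H^2(\Omega)\hookrightarrow L^{10}(\Omega)$, so the strong norm enters with full power: squaring gives $\|u_1^n u_2^n\|_2^2 \le C\,\phi(u^n)\,\|u_1^n\|_{H^2}^2$, and after substituting $\Delta u_1^n=\partial_t u_1^n+bu_1^n-u_1^nu_2^n$ the terms $\|\partial_t u_1^n\|_2^2$ and $\|u_1^n u_2^n\|_2^2$ reappear on the right multiplied by $C\,\phi(u^n)$. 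Absorbing them (into $\frac{1}{2}\|\partial_t u^n\|_H^2$ or into the left-hand side) forces $C\,\phi(u^n)<1$, i.e.\ smallness of the energy, which is not assumed; ``for small times'' does not rescue this, since $\phi(u_0^n)$ is arbitrary at $t=0$. This is precisely why the paper uses the Gagliardo--Nirenberg inequality $\|v\|_4\le C\|v\|_{H^2}^{1/4}\|v\|^{3/4}$: splitting $\|u_1u_2\|_2\le\|u_1\|_4\|u_2\|_4$ instead of $\|u_1\|_{10}\|u_2\|_{5/2}$ yields $\|u_1u_2\|_2^2\le C\,(\|u_1\|_{H^2}\|u_2\|_{H^2})^{1/2}(\|u_1\|\,\|u_2\|)^{3/2}$, in which the $H^2$-norms carry total power $1<2$; Young's inequality then gives the growth condition \eqref{A3} with arbitrarily small $k$ and $\ell(r)=Cr^3$, with no smallness of $\phi$ required. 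You must replace your endpoint H\"older split by this interpolated one.

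The second gap is the uniform existence time, which you flag but never actually close. Theorem \ref{LWPinfty} gives $T_n=T(\|u_{10}^n\|_\infty,\|u_{20}^n\|_\infty)$, and for truncations of $D(\phi)$-data these $L^\infty$ norms are unbounded in $n$ whenever $(u_{10},u_{20})\notin L^\infty(\Omega)\times L^\infty(\Omega)$ (possible for all $N\ge2$), so a priori $T_n\to0$. A uniform bound on $\phi(u^n)$ on $[0,\min(T,T_n^{\max}))$ does not by itself push $T_n^{\max}$ beyond $T$: the continuation criterion attached to the $L^\infty$ theory is blow-up of the $L^\infty$ norm, and for $2\le N\le5$ an $H^1$ bound does not control $L^\infty$. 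Closing this requires an additional smoothing or bootstrap argument (e.g.\ ultracontractivity of the Robin heat semigroup, or a Moser-type iteration applied to the approximants) showing that the $L^\infty$ norm stays finite as long as $\phi$ stays bounded; your proposal only asserts that ``$T_n$ can be chosen independently of $n$.'' The paper avoids the issue entirely because the abstract theorem applies directly to $D(\phi)$-data. A smaller, repairable point: the chain rule $\frac{d}{dt}\phi(u^n)=(\partial\phi(u^n),\partial_t u^n)_H$ integrated from $t=0$ needs justification, since Theorem \ref{LWPinfty} only provides $\sqrt{t}\,\partial_t u^n\in L^2(0,T;L^2(\Omega))$; here one can note that the approximants are bounded, hence $B(u^n)\in L^2(0,T;H)$, and invoke the standard subdifferential theory with a fixed source. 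Your uniqueness and nonnegativity steps are consistent with the paper's and are fine.
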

\begin{proof}
\quad Put $u(t) = ( u_1(t), u_2(t))$ and 
\begin{equation*}
   B(u) := \{ ~\! b \in H ~\! ; ~\!
             b = ( - u_1 u_2, - a  u_1) ~\! \}, 
\end{equation*}
then \eqref{1.2} can be reduced to the following abstract evolution equation in $H$: 
\begin{equation}\label{eq:abstract}
 \frac{d}{dt} u(t) + \partial \phi(u(t)) + B(u(t)) = 0, \quad u(0) = (u_{10}, u_{20}). 
\end{equation}
 We are going to apply Theorem II of \cite{O3}. To do this, we have to check 
 three assumptions. The compactness assumption (A.1) requires that 
 the set $\{ ~\! u \in H ~\! ; ~\! \phi(u) + |u|_H^2 \leq L ~\! \}$ is compact in $H$ for all $L>0$, 
 which is assured by the Rellich-Kondrachov theorem. The demiclosedness assumption (A.2) on 
 $B(u)$ is assured by the continuity of the mapping 
$ (u_1, u_2) \mapsto ( - u_1 u_2, - a  u_1)$ in $\mathbb R^2$. 
\\ 
 The last assumption to check is the boundedness assumption (A.4):
\begin{equation}\label{A3}
  | B(u) |_H^2 \leq k ~\! | \partial \phi(u)|_H^2 + \ell( \phi(u) + |u|_H ) 
     \quad \forall u \in D(\partial \phi), 
\end{equation}
  where $k \in [0,1)$ and $\ell(\cdot) : [0,\infty) \to [0,\infty)$ is a monotone increasing function. 
We note that 
\begin{equation}\label{est:B}
 |B(u)|_H^2 \leq \|u_1\|_4^2\|u_2\|_4^2 + a^2 \|u_1\|_2^2, 
   \quad \exists C>0 \ \text{such that} \ C ( \|u_1\|^2 + \|u_2\|^2 ) \leq \phi(u) + 1.
\end{equation}
  Hence for $N\leq4$, \eqref{A3} holds true with $k=0$ and $\ell(r) = C r^2$. 
  \\
As for the case where $N=5$, Gagliardo-Nirenberg interpolation inequality gives
\begin{equation*}
   \|v\|_4 \leq C \|v\|_{H^2}^{\frac{1}{4}} \|v\|^{\frac{3}{4}}.
\end{equation*}
Then by Young's inequality, \eqref{A3} is satisfied with $ \ell(r)= C r^3$. 
Thus the local existence part is verified. 
\\
To prove the uniqueness part, let $u^1=(u_1^1, u_2^1), \ u^2 = (u^2_1, u^2_2)$ be 
solutions of \eqref{1.2} and put $\delta u_i = u^1_i - u_i^2 \ (i=1,2).$ 
Then $\delta u_i$ satisfy
\begin{align}
  & \partial_t \delta u_1 - \Delta \delta u_1 + b \delta u_1
        =  \delta u_1 u_2^1 + \delta u_2 u_1^2, \label{eq1:delta}
\\
  & \partial_t \delta u_2 - \Delta \delta u_2 = a \delta u_1, \label{eq2:delta}
\\
   & \partial_{\nu} \delta u_1 + \alpha \delta u_1 
          = \partial_{\nu} \delta u_2 
          + \beta ( |u_2^1|^{\gamma-2}u_2^1 - |u_2^2|^{\gamma-2}u_2^2 ) = 0. \label{boundary:delta}
\end{align}
   Multiplying \eqref{eq1:delta} by $\delta u_1$ and \eqref{eq2:delta} by $\delta u_2$, 
we have by \eqref{boundary:delta}
\begin{align}
   & \frac{1}{2} \frac{d}{dt} \|\delta u_1(t)\|_2^2 + \| \nabla \delta u_1 \|_2^2 
            + \alpha \| \delta u_1 \|_{2,\partial\Omega}^2 + b \|\delta u_1 \|_2^2 
                \leq \int_\Omega ( | \delta u_1|^2 ~\! |u_2^1| 
                           + |\delta u_1 | ~\! | \delta u_2| ~\! |u_1^2| ) ~\! dx, 
     \label{est:delta1}
\\[2mm]
   &  \frac{1}{2} \frac{d}{dt} \|\delta u_2(t)\|_2^2 + \| \nabla \delta u_2 \|_2^2 
            + \beta \int_{\partial \Omega} \!\! ( |u_2^1|^{\gamma-2}u_2^1 
                          - |u_2^2|^{\gamma-2}u_2^2 ) ~\! \delta u_2 ~\! d\sigma 
                \leq a \int_\Omega |\delta u_1 | ~\! | \delta u_2| ~\! dx. 
     \label{est:delta2}
\end{align}
   Let $ N \leq 5$, then since $H^1(\Omega)$ and $H^2(\Omega)$ are embedded in $L^{\frac{10}{3}}(\Omega)$ and $L^{10}(\Omega)$ respectively, by Young's inequality 
we find that for any $\varepsilon>0$ there exists $C_\varepsilon>0$ such that 
\begin{align*}
   \int_\Omega |\delta u_i| ~\! |\delta u_j| ~\! |w| ~\! dx 
         & \leq C ~\! \|\delta u_i \| ~\! \|\delta u_j \|_2 ~\! \| w \|_{H^2(\Omega)}
\\
         & \leq \varepsilon ~\! ( \|\nabla \delta u_i\|_2^2 + \| \delta u_i \|_2^2 ) 
                   + C_\varepsilon \| \delta u_j\|_2^2 \|w \|_{H^2(\Omega)}^2. 
\end{align*} 
Hence, by adding \eqref{est:delta1} and \eqref{est:delta2}, we obtain 
\begin{equation*}
  \frac{d}{dt} ( \| \delta u_1(t)\|_2^2 +  \| \delta u_2(t)\|_2^2 ) 
     \leq C ( \|u_2^1\|_{H^2(\Omega)}^2 +  \|u_1^2\|_{H^2(\Omega)}^2 + 1 ) 
                          ~\! ( \|\delta u_1(t)\|_2^2 +  \|\delta u_2(t)\|_2^2 ),  
\end{equation*}
  Thus since $u_2^1, u_1^2 \in L^2(0,T;H^2(\Omega))$, the uniqueness follows from Gronwall's inequality. 
  \\
 The nonnegativity of solutions can be proved by exactly the same argument as in the proof of Theorem 3.1 in 
 \cite{KO}.
\end{proof}
\section{Main result and proof}
\hspace{\parindent} 
In what follows we always consider the case where $\gamma =2$ 
 and we are concerned with global solutions of  \eqref{1.1}.
We put 
 \(H^1=\{(w_1,w_2)\in H^1(\Omega)\times H^1(\Omega) ~\! ; ~\! w_1,w_2\ge0, w_1,w_2\not\equiv0\}\) and  \(V=\{(w_1,w_2)\in L^{\infty}(\Omega)\times L^{\infty}(\Omega) ~\! ; ~\! w_1,w_2\ge0, w_1,w_2\not\equiv0\}\).
Our main theorem can be stated as follows.
\begin{theorem}
	\label{main}
	Let \(N=2,3\) and \(\alpha\le2\beta\).
	Assume that \((u_{10}, u_{20})\in H^1\) and \((u_1,u_2)\) is the corresponding global solution 
 of \eqref{1.1} satisfying the same regularity given in Theorem \ref{LWPH1}.
	Then there exist constants \(M_i=M_i(\|u_1\|,\|u_2\|)>0\) \((i=1,2)\) such that
	\begin{equation}
	\label{2.1}
	\sup_{t\ge0} \|u_1(t)\| \le M_1,~~~ \sup_{t\ge0} \|u_2(t)\| \le M_2.
	\end{equation}
	Moreover if \((u_{10},u_{20})\in V\) and \((u_1,u_2)\) is the corresponding global solution 
 of \eqref{1.1} satisfying the same regularity given in Theorem \ref{LWPinfty}.   
   Then there exist constants \({M'}_i={M'}_i(\|u_{10}\|_{\infty}, \|u_{20}\|_{\infty})>0\) \((i=1,2)\) such that
	\begin{equation}
	\label{2.2}
	\sup_{t\ge0} \|u_1(t)\|_{\infty} \le {M'}_1,~~~ \sup_{t\ge0} \|u_2(t)\|_{\infty} \le {M'}_2.
	\end{equation}		
\end{theorem}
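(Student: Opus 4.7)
The plan follows the three-step roadmap sketched in the introduction.

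\textbf{Step 1 (weighted $L^1$ bound).} Let $\varphi_1>0$ be the first eigenfunction of $-\Delta$ under a Robin boundary condition on $\Omega$; by the strong maximum principle $\varphi_1$ is bounded above and below by positive constants on $\bar\Omega$. Testing the second equation of \eqref{1.1} against $\varphi_1$ produces the clean linear identity
\[
\tfrac{d}{dt}\!\int u_2\,\varphi_1\,dx + \lambda_1 \!\int u_2\,\varphi_1\,dx = a \!\int u_1\,\varphi_1\,dx,
\]
so that controlling $\int u_2\,\varphi_1\,dx$ reduces to controlling $\int u_1\,\varphi_1\,dx$. The corresponding identity for $u_1$ contains the obstructive cubic term $\int u_1 u_2 \varphi_1\,dx$; exploiting the special form of the system together with the hypothesis $\alpha\le2\beta$, one closes a differential inequality which by Gronwall yields $\sup_{t\ge0}\!\int (u_1+u_2)(t)\,\varphi_1\,dx < \infty$, hence a uniform $L^1(\Omega)$-bound on both components.

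\textbf{Step 2 (uniform $H^1$ bound).} Testing \eqref{1.1} against $u_1$ and $u_2$ and adding gives the standard energy identity, whose only obstructive term is $\int u_1^2 u_2\,dx$. The ``special device'' announced in the introduction is the change of variable $w := u_1 - u_2^2/(2a)$, which a direct calculation shows satisfies
\[
\partial_t w - \Delta w = \tfrac{1}{a}|\nabla u_2|^2 - b\,u_1, \qquad \partial_\nu w + \alpha w = \tfrac{2\beta-\alpha}{2a}\,u_2^2 \ge 0,
\]
the nonnegativity of the boundary datum coming precisely from $\alpha\le 2\beta$. Combined with the $L^1$-bound of Step~1 this produces uniform $L^2$-control of $u_2$. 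The Gagliardo-Nirenberg inequality $\|u_1\|_4^2 \le C \|u_1\|_{H^1}^{N/2}\|u_1\|_2^{(4-N)/2}$ (valid for $N\le 3$) combined with Young's inequality then lets one absorb a small fraction of $\|u_1\|_{H^1}^2$ into the energy identity; the remaining lower-order terms are controlled via the $L^2$-bound on $u_2$ and a Gronwall argument, producing the desired uniform $H^1$-bound on $(u_1,u_2)$.

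\textbf{Step 3 ($L^\infty$ bound) and main obstacle.} With the uniform $H^1$-bound in hand, Moser's iteration in the spirit of Nakao \cite{Na1}---testing the equations against $u_i^{p-1}$ and using the Sobolev embedding $H^1 \hookrightarrow L^6$ valid for $N\le3$---yields a recursive estimate between $\|u_i\|_{L^{2p}}$ and $\|u_i\|_{L^p}$, whose iteration along $p=2^k$ produces the uniform $L^\infty$-bound claimed in \eqref{2.2}. The core difficulty is Step~2: the combination $w=u_1-u_2^2/(2a)$ is the unique quadratic correction of $u_1$ that cancels the product nonlinearity $u_1 u_2$ in its equation, and its Robin boundary behavior has the favorable sign exactly under $\alpha \le 2\beta$; without this cancellation the feedback loop $u_1 \leftrightarrow u_2$ cannot be severed and the whole chain of estimates collapses.
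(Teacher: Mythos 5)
Your overall skeleton (weighted $L^1$ bound $\to$ $L^2$ $\to$ $H^1$ $\to$ $L^\infty$ by Moser iteration) matches the paper's, but the two steps that carry all the difficulty are not actually proved, and the mechanisms you propose for them would fail. In Step 1, Gronwall cannot close the weighted $L^1$ estimate: the identity for $u_1$ reads $\frac{d}{dt}\int_\Omega u_1\varphi_1\,dx + (b+\lambda_1)\int_\Omega u_1\varphi_1\,dx + (\alpha-\gamma)\int_{\partial\Omega}u_1\varphi_1\,d\sigma = \int_\Omega u_1u_2\varphi_1\,dx$, and the quadratic term on the right is not dominated by any quantity you control, so no linear differential inequality closes. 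The paper's mechanism is different in kind: it substitutes $u_1 = \frac{1}{a}(\partial_t u_2 - \Delta u_2)$ into that quadratic term, obtaining a second-order differential inequality for $w(t) = \int_\Omega u_2\varphi_1\,dx$; after choosing the eigenfunction's Robin parameter $\gamma = \frac{\alpha+2\beta}{2}$ (this is exactly where $\alpha\le2\beta$ enters, giving every boundary term a favorable sign) and using Schwarz with $\|\varphi_1\|_1=1$, this collapses to the Riccati inequality $w'(t) \ge \frac14 w(t)^2 - C_1$. The bound $w \le 2\sqrt{C_1}$ then follows not from Gronwall but from a blow-up dichotomy: if $w$ ever exceeded $2\sqrt{C_1}$, the Riccati inequality would force $w$ to blow up in finite time, contradicting global existence. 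This ``global existence precludes Riccati blow-up'' argument is the heart of the paper and is entirely absent from your proposal.

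In Step 2, your device $w = u_1 - u_2^2/(2a)$ is a correct computation (indeed $\partial_t w - \Delta w = \frac1a|\nabla u_2|^2 - bu_1$ and $\partial_\nu w + \alpha w = \frac{2\beta-\alpha}{2a}u_2^2$), but it does not buy what you claim. Integrating its equation over $\Omega$ merely reproduces the sum of the standard $u_2$-energy identity and the $u_1$-mass identity, in the form $\frac{1}{2a}\frac{d}{dt}\|u_2\|_2^2 + \frac1a\|\nabla u_2\|_2^2 + \frac{\beta}{a}\|u_2\|_{2,\partial\Omega}^2 = \frac{d}{dt}\|u_1\|_1 + \alpha\int_{\partial\Omega}u_1\,d\sigma + b\|u_1\|_1$; for $\alpha>0$ the trace term $\alpha\int_{\partial\Omega}u_1\,d\sigma$ is not controlled by the interior $L^1$ bound of Step 1, so the $L^2$ bound on $u_2$ does not follow. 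The paper avoids this trap precisely because in its weighted identity the coefficient $\alpha-\gamma$ of the $u_1$ boundary term is nonpositive, which yields the time-averaged bound $\sup_t\int_t^{t+1}\!\int_\Omega u_1u_2\,dx\,d\tau \le C$ that drives its Gronwall/convolution argument. Your $H^1$ step also fails: after $\|u_1\|_4^2 \le C\|u_1\|_{H^1}^{N/2}\|u_1\|_2^{(4-N)/2}$ and Young, the residual term is $C\|u_2\|_2^{4/(4-N)}\|u_1\|_2^2$, whose coefficient is bounded but not small, so Gronwall gives only exponential-in-time growth of $\|u_1\|_2$, not a uniform bound. The paper instead interpolates against the already-bounded $L^1$ norm, $\|u_1\|_4^2 \le \|u_1\|_1^{1/5}\|u_1\|_6^{9/5}$ (here $N\le3$ enters via $H^1\hookrightarrow L^6$), making the nonlinearity strictly subquadratic in $\|u_1\|$ and hence absorbable with constants independent of $\|u_1\|_2$. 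Finally, a uniform $H^1$ bound cannot come from the $L^2$-level identity you invoke at all: the paper must first secure $L^2$ bounds for both components and then test separately with $-\Delta u_2$ and $-\Delta u_1 + bu_1$. Your Step 3 (Moser iteration) is consistent with the paper's final step, but it is conditional on the $H^1$ bound your argument does not deliver.
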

We divide the proof into several steps.
 We first derive the \(L^1\)-estimate of the solutions.
In this step, we rely on the properties of the first eigenvalue and the corresponding eigenfunction of \(-\Delta\) with the Robin boundary conditions :
\begin{lemma}[\(\cite{EOE}\)]
	\label{phi1}
	Let \(\lambda_1\) and \(\varphi_1\) be the first eigenvalue and the corresponding eigenfunction for the problem:
	\begin{equation}
	\label{2.3}
	\left\{
	\begin{aligned}
	& -\Delta\varphi = \lambda\varphi, && x\in\Omega,\\
	& \partial_{\nu}\varphi + \gamma\varphi = 0, &&x\in\partial\Omega,
	\end{aligned}
	\right.
	\end{equation}
	where \(\Omega\) is smooth bounded domain in \(\mathbb{R}^N\) and \(\gamma>0\). Then \(\lambda_1>0\) and there exists a constant \(C_{\gamma}>0\) such that
	\begin{equation*}
	\varphi_1(x)\ge C_{\gamma}\hspace{10mm}x\in\overline{\Omega}.
	\end{equation*}
\end{lemma}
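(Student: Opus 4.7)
The plan is to establish the two assertions separately: first the positivity $\lambda_1>0$ via the variational characterization, then the strict positivity of $\varphi_1$ on the entire closed domain $\overline{\Omega}$ via a combination of the strong maximum principle and the Hopf boundary point lemma, tailored to the Robin condition.

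First I would recall that $\lambda_1$ is characterized by the Rayleigh quotient
\begin{equation*}
 \lambda_1 \;=\; \inf_{\varphi\in H^1(\Omega)\setminus\{0\}}
    \frac{\displaystyle\int_\Omega |\nabla \varphi|^2\,dx
           + \gamma \int_{\partial\Omega} \varphi^2 \,d\sigma}
         {\displaystyle\int_\Omega \varphi^2\,dx}.
\end{equation*}
Since $\gamma>0$, the numerator vanishes only if $\nabla \varphi \equiv 0$ (so $\varphi$ is constant) and simultaneously $\varphi|_{\partial\Omega}\equiv 0$, which forces $\varphi\equiv 0$. Hence the infimum is strictly positive, and standard compactness of the embedding $H^1(\Omega)\hookrightarrow L^2(\Omega)$ shows that it is attained, yielding $\lambda_1>0$ and an eigenfunction $\varphi_1\in H^1(\Omega)$. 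Replacing $\varphi_1$ by $|\varphi_1|$ does not change the Rayleigh quotient, so I may assume $\varphi_1\ge 0$, and elliptic regularity together with smoothness of $\partial\Omega$ promotes $\varphi_1$ to a classical solution belonging to $C^1(\overline{\Omega})$.

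Next I would argue that $\varphi_1$ is strictly positive everywhere on $\overline{\Omega}$. In the interior this is the standard strong maximum principle: $\varphi_1$ is a nonnegative, nonconstant $C^2$ solution of $-\Delta\varphi_1=\lambda_1\varphi_1\ge 0$, so it cannot attain the value $0$ inside $\Omega$ without vanishing identically. The delicate point is at the boundary: suppose for contradiction $\varphi_1(x_0)=0$ for some $x_0\in\partial\Omega$. Then $x_0$ is an interior minimum point of $\varphi_1$ over $\overline\Omega$, and the Hopf boundary point lemma applied to $-\Delta\varphi_1+(\text{$0$th order})=0$ in $\Omega$ forces $\partial_{\nu}\varphi_1(x_0)<0$. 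But the Robin boundary condition gives $\partial_\nu \varphi_1(x_0)=-\gamma\varphi_1(x_0)=0$, a contradiction. Hence $\varphi_1>0$ on all of $\overline{\Omega}$.

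Finally, since $\varphi_1\in C(\overline{\Omega})$ is strictly positive on the compact set $\overline{\Omega}$, the constant $C_\gamma:=\min_{x\in\overline{\Omega}}\varphi_1(x)$ is strictly positive and provides the required uniform lower bound. I expect the main technical obstacle to be the boundary step: obtaining enough regularity ($C^1$ up to $\partial\Omega$) to legitimately apply Hopf's lemma requires the smoothness of $\partial\Omega$ and the Schauder-type regularity for Robin problems, and one must verify that the interior sphere condition together with $\varphi_1\not\equiv 0$ lets Hopf's lemma produce the strict sign on the normal derivative that collides with the Robin relation.
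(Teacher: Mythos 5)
Your proof is correct and follows essentially the same route as the paper: strict positivity of $\varphi_1$ in the interior by the strong maximum principle, then at a hypothetical boundary zero $x_0$ the Robin condition forces $\partial_\nu\varphi_1(x_0)=-\gamma\varphi_1(x_0)=0$, contradicting Hopf's lemma, and finally compactness of $\overline{\Omega}$ gives the uniform lower bound $C_\gamma$. The only addition is your Rayleigh-quotient argument for $\lambda_1>0$, which the paper delegates to the cited reference \cite{EOE}; just note that the strict positivity of the infimum follows from its attainment at a nonzero minimizer (not from positivity of the numerator alone), as your own mention of compactness already implicitly supplies.
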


Actually, it is easy to see that \(\varphi_1>0\) in \(\Omega\) by the strong maximum principle as the same method for the eigenvalue problem with the Dirichlet Laplacian. 
Furthermore suppose that there exists \(x_0\in\partial\Omega\) such that \(\varphi_1(x_0)=0\). 
Then the boundary condition assures \(\partial_\nu\varphi_1(x_0)=-\gamma\varphi_1(x_0)=0\). 
On the other hand, we know \(\partial_\nu\varphi_1(x_0)<0\) by Hopf's strong maximum principle. 
This is contradiction, i.e., \(\varphi_1(x)>0\) on \(\overline{\Omega}\).

 The second step is to derive uniform $L^2$-estimates and third one is to 
  derive uniform $H^1$-estimates.
In the last step, we get uniform \(L^{\infty}\) bounds for global solutions of \(\eqref{1.1}\) 
 applying Moser's iteration scheme (see \(\cite{A1}\) and \(\cite{Na1}\)).
%
\\[2mm]
 (1) \underline{$ \text{\it Uniform estimates in} \ L^1$}
\\
\quad   Let \(\lambda_1\) and \(\varphi_1\) be the first eigenvalue and the corresponding eigenfunction of \(\eqref{2.3}\) respectively.
	We here normalize \(\varphi_1\) so that \(\|\varphi_1\|_1=1\).
	Multiplying \(\varphi_1\) by the first and second equations of \(\eqref{1.1}\), we get 
\begin{align}
		& \Bigl( \int_{\Omega} u_1\varphi_1 dx \Bigr)_t 
                 + (b+\lambda_1) \int_{\Omega} u_1\varphi_1 dx  
                   +  (\alpha-\gamma) \int_{\partial\Omega} u_1\varphi_1 d\sigma 
                      = \int_{\Omega}u_1u_2\varphi_1 dx,
           \label{3.4}
\\[1.5mm]
		& \Bigl( \int_{\Omega} u_2\varphi_1 dx \Bigr)_t 
                + \lambda_1 \int_{\Omega} u_2\varphi_1 dx 
                   + (\beta-\gamma) \int_{\partial\Omega} u_2\varphi_1 d\sigma 
                      = a \int_{\Omega} u_1\varphi_1 dx. 
			\label{3.5}	
\end{align}
	Multiplying \eqref{3.4} by $a$ and substituting \eqref{3.5} and equation \eqref{1.1}  
to the second term of the left-hand side and the right-hand side respectively, we have  
\begin{equation}\label{eq:34:deform} 
   \begin{split}
   a \Bigl( \int_{\Omega} u_1\varphi_1 dx \Bigr)_t 
      + (b + \lambda_1) \Bigl( \Bigl( \int_{\Omega} u_2\varphi_1 dx \Bigr)_t 
             & + \lambda_1 \int_{\Omega} u_2\varphi_1 dx 
                + (\beta-\gamma) \int_{\partial\Omega} u_2\varphi_1 d\sigma  \Bigr)
\\[2mm]
     & + (\alpha-\gamma) \int_{\partial\Omega} u_1\varphi_1 d\sigma
          = \int_{\Omega} \left( \partial_t u_2 - \Delta u_2 \right)u_2\varphi_1 dx
   \end{split}
\end{equation}	
	Then differentiating \eqref{3.5} with respect to $t$ once 
      and substituting \eqref{eq:34:deform} to the right-hand side, we obtain
\begin{align}
	\nonumber
		& \Bigl( \int_{\Omega} u_2\varphi_1 dx \Bigr)_{tt} 
           + (b+2\lambda_1) \Bigl( \int_{\Omega} u_2\varphi_1 dx \Bigr)_t 
              + \lambda_1(b+\lambda_1) \int_{\Omega} u_2\varphi_1 dx
\\[1.5mm]
		\nonumber
		& \hspace{7mm} + (\alpha-\gamma) \int_{\partial\Omega} u_1\varphi_1 d\sigma 
           + (\beta-\gamma) \Bigl( \int_{\partial\Omega} u_2\varphi_1 d\sigma \Bigr)_t 
              + (\beta-\gamma)(b+\lambda_1) \int_{\partial\Omega} u_2\varphi_1 d\sigma
\\[1.5mm]
		= &~ \int_{\Omega} \left( \partial_t u_2 - \Delta u_2 \right)u_2\varphi_1 dx 
\nonumber
\\[1.5mm]
		= &~ \frac{1}{2}  \Bigl( \int_{\Omega} u_2^2\varphi_1 dx \Bigr)_t 
               + \int_{\Omega} |\nabla u_2|^2\varphi_1 dx 
                  + \frac{\lambda_1}{2} \int_{\Omega} u_2^2\varphi_1 dx 
                     + \left( \beta - \frac{\gamma}{2} \right) 
                         \int_{\partial\Omega} u_2^2\varphi_1 d\sigma.
\end{align}
	Finally choosing \(\gamma=\frac{\alpha+2\beta}{2}>0\), we deduce  
\begin{equation}\label{3.8}
   \begin{split}
	 \Bigl( \int_{\Omega} u_2\varphi_1 dx \Bigr)_{tt} 
        & + (b+2\lambda_1) \Bigl( \int_{\Omega} u_2\varphi_1dx \Bigr)_{t} 
           + \lambda_1(b+\lambda_1) \int_{\Omega}u_2\varphi_1 dx
\\[2mm]
    &  - \frac{\alpha}{2}\Bigl( \int_{\partial\Omega} u_2\varphi_1d\sigma \Bigr)_t
       - \frac{\alpha}{2}\lambda_1\int_{\partial\Omega} u_2\varphi_1d\sigma 
 \geq \frac{1}{2} \Bigl( \int_{\Omega} u_2^2\varphi_1 dx \Bigr)_t 
                            + \frac{\lambda_1}{2} \int_{\Omega} u_2^2\varphi_1 dx. 
   \end{split}
\end{equation}
We now set 
\begin{equation*}
  y(t): = w'(t) + ( b + \lambda_1)~\! w(t) 
            - \frac{1}{2}\int_{\Omega}u_2^2 ~\! \varphi_1 ~\! dx
              - \frac{\alpha}{2}\int_{\partial\Omega} \!\! u_2 ~\! \varphi_1 ~\! d\sigma, 
   \quad w(t) := \int_{\Omega}u_2 ~\! \varphi_1 ~\! dx.
\end{equation*}
  Since $\partial_t u_2 \in L^2(0,T;L^2(\Omega))$ implies that 
  there exists $s_0 \in (0,1)$ such that $ |y(s_0)| < \infty$.
Then \eqref{3.8} yields 
\begin{equation*}
  y'(t) \geq - \lambda_1 ~\! y(t), \quad \text{hence} \quad  
         y(t) \geq y(s_0) ~\! e^{-\lambda_1 (t - s_0)} \geq - | y(s_0)| =:  - C_0 
              \quad \forall t \geq s_0.
\end{equation*}
   Hence by virtue of Schwarz's inequality and Young's inequality, we get 
\begin{align*}
	-C_0 \le y(t) & =  w'(t) + (b+\lambda_1) ~\! w(t) 
                       - \frac{1}{2}\int_{\Omega}u_2^2\varphi_1 ~\! dx 
                         -\frac{\alpha}{2}\int_{\partial\Omega} \!\! u_2 \varphi_1 ~\! d\sigma
\\[1.5mm]
        	& \le w'(t) + (b+\lambda_1) ~\! w(t) - \frac{1}{2}w^2(t)
\\[1.5mm]
        	 & \le w'(t) - \frac{1}{4} ~\! w^2(t) + (b+\lambda_1)^2
                  \qquad \forall t \geq s_0,
\end{align*}
i.e.,
\begin{equation}\label{3.9}
	w'(t) \ge \frac{1}{4}w^2(t)-C_1, \quad C_1:=C_0+(b+\lambda_1)^2>0 \quad \forall t \ge s_0, 
\end{equation}
  whence follows
\begin{equation}\label{3.10}
	w(t) \le 2C_1^{\frac{1}{2}} =: C_2 \qquad  \forall t\geq s_0, 
\end{equation}
Indeed, if there exists \(t_1\ge s_0\) such that 
\begin{equation}\label{3.11}
	\frac{1}{4}w^2(t_1)-C_1>0,
\end{equation}
then from \(\eqref{3.9}\), \(\eqref{3.11}\) we can deduce that there exists \(t_2>t_1\) such that 
\begin{equation*}
	\lim_{t\to t_2}w(t) = + \infty, 
\end{equation*}
which contradicts the assumption that \(w(t)\) exists globally.
Thus \(\eqref{3.10}\) holds and the following global bound for $w(t)$ is established. 
\begin{equation}\label{3.12}
	\sup_{t\ge 0}\int_{\Omega} u_2 ~\! \varphi_1 ~\! dx 
             \leq \overline{C}_2 := \max \Bigl( C_2, \max_{0\leq s \leq s_0} w(s) \Bigr).
\end{equation}
\quad Next we derive a uniform estimate for \(\int_{\Omega}u_1\varphi_1dx\). 
Using the facts that \(u_1=\frac{1}{a}(\partial_t u_2-\Delta u_2)\) and \((u_1,u_2)\) are nonnegative in \eqref{3.4}, we can get
\begin{align*}
    \frac{d}{dt}\Bigl( \int_{\Omega}u_1\varphi_1 dx \Bigr) 
     & \ge - (b+\lambda_1) \int_{\Omega}u_1\varphi_1 ~\! dx 
\\[1.5mm]
     & = - (b+\lambda_1)\frac{1}{a}\int_{\Omega}(\partial_t u_2 - \Delta u_2)\varphi_1 ~\! dx
\\[1.5mm]
     & = - \frac{b+\lambda_1}{a}w'(t) - \frac{(b+\lambda_1)\lambda_1}{a}w(t) 
             + \frac{(b+\lambda_1)\alpha}{2a}\int_{\partial\Omega}u_2\varphi_1 d\sigma
\\[1.5mm]
     & \ge - \frac{b+\lambda_1}{a}w'(t) - \frac{(b+\lambda_1)\lambda_1}{a}w(t). 
\end{align*}
 For \(\eta\in(0,1)\), integrating this inequality over \((t,t+\eta)\) 
   and using \(\eqref{3.12}\), we obtain
\begin{align*}
   \left[ \int_{\Omega} u_1\varphi_1 dx \right]^{t+\eta}_{t} 
     & \ge - \frac{b+\lambda_1}{a} \left( w(t+\eta) - w(t) \right) 
              - \frac{(b+\lambda_1)\lambda_1}{a}\int_{t}^{t+\eta} \!\! w(\tau) ~\! d\tau
\\[1.5mm]
     & \ge - \frac{b+\lambda_1}{a} ~\! \overline{C}_2 
              - \frac{(b+\lambda_1)\lambda_1}{a} ~\! \overline{C}_2 =:-C_3,
\end{align*}
where \(C_3>0\) is independent of \(t\) and \(\eta\).
This implies that
\begin{equation}\label{3.13}
   \int_{\Omega}u_1(t)\varphi_1 ~\! dx 
          \le C_3 + \int_{\Omega} u_1(t+\eta)\varphi_1 ~\! dx.
\end{equation} 
   Integrating \(\eqref{3.13}\) over \(\eta\in(0,1)\) and using integration by parts, we get
\begin{align*}
   \int_{\Omega} u_1(t)\varphi_1 dx 
      & \le C_3 + \int_{0}^{1}\int_{\Omega} u_1(t+\eta)\varphi_1 ~\! dx ~\! d\eta
\\[1.2mm]
      &  = C_3 + \int_{t}^{t+1} \!\!\! \int_{\Omega} u_1(\tau)\varphi_1 ~\! dx ~\! d\tau
\\[1.2mm]
      &  = C_3 + \frac{1}{a}\int_{t}^{t+1} \!\!\!
                   \int_{\Omega} (\partial_t u_2 -\Delta u_2)\varphi_1 ~\! dx ~\! d\tau
\\[1.2mm]
      &  = C_3 + \frac{1}{a}\left( w(t+1) - w(t) \right) 
                     + \frac{\lambda_1}{a}\int_{t}^{t+1} \!\! w(\tau) ~\! d\tau 
                       - \frac{\alpha}{2a}\int_{t}^{t+1} \!\!\!
                            \int_{\partial\Omega}u_2\varphi_1 ~\! d\sigma ~\! d\tau
\\[1.2mm]
      &  \le C_3 + \frac{1+\lambda_1}{a} ~\! \overline{C}_2=:C_4,
\end{align*} 
which concludes that 
\begin{equation}\label{3.14}
\sup_{t\ge 0}\int_{\Omega}u_1\varphi_1dx\le C_4.
\end{equation}
Thus, from \(\eqref{3.12}\), \(\eqref{3.14}\) and Lemma \(\ref{phi1}\), we can derive the following estimates:
\begin{equation}\label{3.15}
\sup_{t\ge 0}\|u_1(t)\|_{1}\le C_5,~~~\sup_{t\ge 0}\|u_2(t)\|_{1}\le C_6.
\end{equation}
\\[2mm]
 (2) \underline{$ \text{\it Uniform estimates in} \ L^2$}
\\[2mm]
 \quad We here try to get \(L^2\) uniform bounds of solutions of \(\eqref{1.1}\). 
Since \(\eqref{3.4}\) gives 
\begin{equation*}
   \int_{\Omega} u_1u_2\varphi_1 dx
       \le \frac{d}{dt}\Bigl( \int_{\Omega} u_1\varphi_1 ~\!dx \Bigr) 
               + (b+\lambda_1) \int_{\Omega} u_1\varphi_1~\! dx, 
\end{equation*}
it follows from \eqref{3.14} that  
\begin{equation}\label{3.16}
    \sup_{t\ge 0} \int_{t}^{t+1} \!\!\! \int_{\Omega} u_1u_2  ~\! dx ~\! d\tau \le C_7.
\end{equation}
  Multiplying the second equation of \(\eqref{1.1}\) by \(u_2\) and using integration by parts, we get 
\begin{equation*}
   \frac{1}{2}\frac{d}{dt}\|u_2(t)\|_2^2 + \|\nabla u_2(t)\|_2^2
       + \beta\|u_2(t)\|_{2,\partial\Omega}^2 = a\int_{\Omega}u_1u_2 ~\! dx,
\end{equation*} 
where \(\|v\|_{2,\partial\Omega}^2=\int_{\partial\Omega} v^2 d\sigma\). 
  Hence by virtue of Poincar\'e - Friedrichs' inequality   
 \( C_F \|v\|_2^2 \le (\|\nabla v\|_2^2 + \beta\|v\|_{2,\partial\Omega}^2)\), 
   we have
\begin{equation}\label{3.17} 
	\frac{1}{2}\frac{d}{dt} \|u_2(t)\|_2^2 
       + C_F \|u_2(t)\|_2^2 
           \le a\int_{\Omega}u_1u_2 ~\! dx.
\end{equation}
Applying Gronwall's inequality to \(\eqref{3.17}\), we get
\begin{equation}\label{3.18}
	\|u_2(t)\|_2^2 \le e^{-2 C_F t} \|u_{20}\|_2^2 
          + \int_{0}^{t} 2a \Bigl( \int_{\Omega}u_1u_2 ~\! dx \Bigr) e^{-2 C_F(t-\tau)} ~\! d\tau.
\end{equation}
  In order to obtain uniform bounds of \(L^2\)-norm for \(u_2\) with respect to \(t\), 
   we need to confirm that the second term of right hand side of \(\eqref{3.18}\) is bounded. 
    For any \(t\ge 0\), we can express \(t=n+\varepsilon\) with some \(n\in\mathbb{N}\cup\{0\}\)     and \(\varepsilon\in[0,1)\).
       Then, by virtue of \(\eqref{3.16}\), it follows that
\begin{align*}
    & \int_{0}^{t}\Bigl( \int_{\Omega}u_1u_2 dx \Bigr) e^{-2 C_F (t-\tau)} ~\! d\tau
\\[1.5mm]
   = & \int_{t-1}^{t}\Bigl( \int_{\Omega}u_1u_2 ~\! dx \Bigr) e^{-2 C_F (t-\tau)} ~\! d\tau 
        + \int_{t-2}^{t-1}\Bigl( \int_{\Omega}u_1u_2 ~\! dx \Bigr) e^{-2 C_F (t-\tau)} ~\! d\tau
\\[1.5mm]
     & + \cdots + \int_{t-n}^{t-(n-1)}\Bigl( \int_{\Omega}u_1u_2 ~\! dx \Bigr) e^{-2 C_F (t-\tau)} ~\! d\tau 
         + \int_{0}^{t-n}\Bigl( \int_{\Omega}u_1u_2 ~\! dx \Bigr) e^{-2 C_F (t-\tau)}~\!  d\tau
\\[1.5mm]
    \le & ~ e^{-0} \int_{t-1}^{t}\Bigl( \int_{\Omega}u_1u_2 ~\! dx \Bigr) d\tau 
             + e^{-2 C_F} \int_{t-2}^{t-1}\Bigl( \int_{\Omega}u_1u_2 ~\! dx \Bigr) d\tau
\\[1.5mm]
       &  + \cdots + e^{-2(n-1) C_F} \int_{t-n}^{t-(n-1)}\Bigl( \int_{\Omega}u_1u_2 ~\! dx \Bigr)d\tau 
           + e^{-2n C_F} \int_{0}^{t-n}\Bigl( \int_{\Omega}u_1u_2 ~\! dx \Bigr) d\tau
\\[1.5mm]
      \le & ~C_7 \Bigl( 1 + e^{-2 C_F} + e^{-4 C_F} + \cdots + e^{-2n C_F} \Bigr)
\\[1.5mm]
       =  & ~C_7 \frac{1-e^{-2(n+1) C_F}}{1-e^{-2 C_F}} \leq \frac{C_7}{1-e^{-2 C_F}}.
\end{align*}
  Therefore we obtain from \(\eqref{3.18}\)
\begin{equation*}
    \|u_2(t)\|_2^2 \le e^{-2 C_F t} \|u_{20}\|_2^2 + \frac{2 a C_7}{1-e^{-2 C_F}} \qquad \forall t\ge 0.
\end{equation*}
  This implies that there exists \(C_8>0\) such that
\begin{equation}\label{3.19}
	\sup_{t\ge 0}\|u_2(t)\|_2 \le C_8.
\end{equation}
   Note that the above argument can be done without any restriction on dimension \(N\).  
\\
    We next derive a uniform \(L^2\)-estimate of \(u_1\) for $ N \leq 3$.
  Multiplying the first equation of \(\eqref{1.1}\) by \(u_1\) and using integrating by parts, we have
\begin{equation*}
      \frac{1}{2}\frac{d}{dt}\|u_1(t)\|_2^2 + \|\nabla u_1(t)\|_2^2 
           + \alpha \|u_1(t)\|_{2,\partial\Omega}^2 + b\|u_1(t)\|_2^2
              = \int_{\Omega} u_1^2u_2 ~\! dx.
\end{equation*}
 We here adopt \((\|\nabla v\|_2^2  + b ~\! \|v\|_2^2)^{1/2} \) as the \(H^1\) norm for \(u_1\). 
   By using H\"older's inequality, the interpolation inequality and the embedding theorem
     (\(\|v\|_6\le C_9\|v\|\)), it holds that
\begin{align*}
     \frac{1}{2}\frac{d}{dt}\|u_1(t)\|_2^2 + \|u_1(t)\|^2
       & \leq \int_{\Omega} u_1^2u_2 ~\! dx
\\[1.5mm]
       & \le \|u_1(t)\|_4^2 \|u_2(t)\|_2
\\[1.5mm]
       & \le \|u_1(t)\|_1^{\frac{1}{5}} \|u_1(t)\|_{6}^{\frac{9}{5}} \|u_2(t)\|_2
\\[1.5mm]
       & \le C_5^{\frac{1}{5}} C_8 C_{9}^{\frac{9}{5}} \|u_1(t)\|^{\frac{9}{5}}\le \frac{1}{2}\|u_1(t)\|^2 + C_{10}, 
\end{align*} 
which implies 
\begin{equation*}
  \frac{1}{2}\frac{d}{dt}\|u_1(t)\|_2^2 + \frac{1}{2}\|u_1(t)\|^2  \le C_{10}.
\end{equation*}
 Hence we obtain 
\begin{equation*}
     \|u_1(t)\|_2^2  \le e^{-t}\|u_{10}\|_2^2 + 2C_{10}\left( 1-e^{-t} \right),
\end{equation*}
i.e.,
\begin{equation}\label{3.20}
\sup_{t\ge 0}\|u_1(t)\|_2 \le C_{11}.
\end{equation}
(3) \underline{$ \text{\it Uniform estimates in} \ H^1$}
\\[2mm]
\quad 
   Now we are in the position to derive a uniform \(H^1\) bounds of solutions of \eqref{1.1}.
Multiplying the second equation of \(\eqref{1.1}\) by \(-\Delta u_2\), we obtain  
\begin{align*}
   \frac{1}{2}\frac{d}{dt} ( \|\nabla u_2(t)\|_2^2 + \beta \|u_2(t)\|_{2,\partial \Omega}^2 )
       + \|\Delta u_2(t)\|_2^2 = - a \int_{\Omega}u_1\Delta u_2 ~\! dx 
            \le \frac{1}{2}\|\Delta u_2(t)\|_2^2 + \frac{a^2}{2}\|u_1(t)\|_2^2.
\end{align*}
  Here we define the $H^1$-norm of $u_2$ by
\begin{equation*}
    \| u_2 \|^2 := \|\nabla u_2(t)\|_2^2 + \beta \|u_2(t)\|_{2,\partial \Omega}^2. 
\end{equation*}
  Then it holds that \( C_F \| u_2 \|^2 \leq \|\Delta u_2\|_2^2 \), since
\begin{equation*}
   (C_F)^{\frac{1}{2}}  \|u_2\|_2 ~\! \|u_2 \| \le \|\nabla u_2\|_2^2 +  \beta \|u_2(t)\|_{2,\partial \Omega}^2 
          = (-\Delta u_2,u_2) \le \|\Delta u_2\|_2 \|u_2\|_2,
\end{equation*}
where \((\cdot,\cdot)\) denotes the inner product of \(L^2\).
  Hence we obtain
\begin{equation*}
    \frac{d}{dt}\|u_2(t)\|^2 + C_F \|u_2(t)\|^2 \le a^2 ~\! C_{11}^2,
\end{equation*}
whence follows
\begin{equation}\label{3.21}
\sup_{t\ge 0} \|u_2(t)\| \le C_{12}.
\end{equation}
 \quad In order to derive the uniform $H^1$-estimate for $u_1$, we prepare 
 the following functional $\phi_1(u_1)$:
 \begin{equation*}
    \phi_1(u_1) := \frac{1}{2} ( \|\nabla u_1 \|_2^2 + \alpha ~\! \|u_1\|_{2, \partial \Omega}^2 + 
                                    b ~\! \|u_1\|_2^2 ) \qquad u_1 \in H^1(\Omega).
 \end{equation*}
 Then it is easy to see
\begin{align}
  & \phi_1(u_1) \geq \frac{1}{2} \|u_1\|^2 \geq \frac{b}{2} \|u_1\|_2^2,
     \label{est:phi1}
\\[2mm]
  & \| - \Delta u_1 + b ~\! u_1 \|_2 \|u_1 \|_2 \geq | ( - \Delta u_1 + b ~\! u_1, u_1 ) | 
                                   = 2 ~\! \phi_1(u_1) 
                                     \geq 2 ~\! \sqrt{\phi_1(u_1)} \sqrt{\frac{b}{2}} \|u_1\|_2, 
   \notag
\end{align} 
  whence follows
\begin{equation}\label{est:Deltau1}
   2b ~\! \phi_1(u_1) \leq \| - \Delta u_1 + b ~\! u_1 \|_2^2.
\end{equation}
  Multiplication of the first equation of \(\eqref{1.1}\) by \( - \Delta u_1 + b u_1\) 
    and integration over \(\Omega\) yield 
\begin{equation}\label{3.24}
     (\partial_t u_1, - \Delta u_1 + b ~\! u_1) + \|- \Delta u_1 + b ~\! u_1 \|_2^2
          = ( u_1 u_2 , - \Delta u_1 + b ~\! u_1 )
               \leq  \frac{1}{2} ( \| u_1 u_2 \|_2^2 + \|- \Delta u_1 + b ~\! u_1 \|_2^2) .
\end{equation}
  Here we note 
\begin{equation*}
(\partial_t u_1, -\Delta u_1 + b ~\! u_1) = \frac{d}{dt} \phi_1(u_1(t)).
\end{equation*} 
   Hence, in view of \eqref{3.24} and \eqref{est:Deltau1}, we obtain 
\begin{equation*}
   \frac{d}{dt} \phi_1(u_1(t)) + b ~\! \phi_1(u_1(t)) \leq \frac{1}{2} \| u_1 u_2 \|_2^2.
\end{equation*}
   Here by H\"older's inequality, \eqref{3.19}, \eqref{3.20}, \eqref{3.21},\eqref{est:phi1} and Young's inequality, we get 
\begin{align*}
  \| u_1 u_2 \|_2^2 = \int_{\Omega}u_1^2 ~\! u_2^2 ~\! dx 
      = & \int_{\Omega} u_1^{\frac{1}{2}} u_2^{\frac{1}{2}} u_1^{\frac{3}{2}} u_2^{\frac{3}{2}} ~\! dx 
\\[1.5mm]
      \le & \Bigl( \int_{\Omega}u_1 u_2 ~\! dx \Bigr)^\frac{1}{2}
              \Bigl( \int_{\Omega}u_1^3 u_2^3 ~\! dx \Bigr)^\frac{1}{2} 
\\[1.5mm]
      \le &~ C_{11}^\frac{1}{2} C_{8}^\frac{1}{2} \|u_1(t)\|_6^\frac{3}{2} \|u_2(t)\|_6^\frac{3}{2}
\\[1.5mm]
      \le &~ b~\!  \phi_1(u_1(t)) + C_{13}.
\end{align*}
Hence it follows that
\begin{align*}
    \frac{d}{dt} \phi_1(u_1(t)) +  \frac{b}{2} \phi_1(u_1(t)) 
          \le \frac{C_{13}}{2}.
\end{align*}
Therefore, applying Gronwall's inequality, we deduce
\begin{equation*}
   \phi_1(u_1(t)) \le  \phi_1(u_1(0)) ~\!  e^{-\frac{b}{2} t}  +  \frac{C_{13}}{b}.
\end{equation*}
which implies that
\begin{equation}\label{3.25}
     \sup_{t\ge 0} \|u_1(t)\| \le C_{14}.
\end{equation}
(4) \underline{$ \text{\it Uniform estimates in} \ L^\infty$}
\\[2mm]
\quad 
  Since Theorem \ref{LWPinfty} assures that there exists $s_1 \in (0,1)$ such that 
   $u(s_1) \in H^1(\Omega)$ and $\|u(t)\|_\infty$ is bounded on $[0,s_1]$, 
    we can assume without loss of generality that \((u_{10},u_{20})\in H^1 \cap V\). 
To derive \(L^{\infty}\) bounds via \(H^1\) bounds, we rely on the following Alikakos - Moser's iteration scheme,
 which plays an essential role in our argument.

\begin{lemma}[\(\cite{Na1}\)]\label{Moser}
	Assume that \(v\in W^{1,2}_{loc}([0,\infty);L^2(\Omega))\cap L^{\infty}_{loc}([0,\infty);L^{\infty}(\Omega)\cap H^1(\Omega))\) satisfies
	\begin{equation}\label{3.26}
	\frac{d}{dt}\|v(t)\|_r^r + c_1 r^{-\theta_1} \| |v(t)|^{\frac{r}{2}} \|^2 \le c_2r^{\theta_2}\left( \|v(t)\|_r^r + 1 \right) \hspace{10mm} a.e. ~t\in[0,\infty),
	\end{equation}
	for all \(r\in[2,\infty)\), where \(c_1>0\) and \(c_2\), \(\theta_1\), \(\theta_2\ge0\).
	Then there exist some constants \(d_1\), \(d_2\), \(d_3\) and \(d_4\ge0\) such that
	\begin{equation*}
		\sup_{t\ge 0} \|v(t)\|_{\infty} \le d_1 2^{\theta_2+ (\theta_1+\theta_2)d_2} M_0,
	\end{equation*}
	where \(M_0=\max(1, d_3\|v_0\|_{\infty},\sup_{t\ge 0}\|v(t)\|_2^{d_4})\).
\end{lemma}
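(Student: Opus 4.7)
The plan is to set up a Moser-type iteration on the $L^{2^k}$-norms of $v$. I would first invoke the Sobolev embedding $H^1(\Omega)\hookrightarrow L^{2\sigma}(\Omega)$ for some fixed $\sigma>1$, applied to $w=|v|^{r/2}$, to get $\|v\|_{r\sigma}^r = \|w\|_{2\sigma}^2 \le C_S^2\,\|w\|^2 = C_S^2 \,\||v|^{r/2}\|^2$. Hence the dissipative term on the left-hand side of \eqref{3.26} controls the higher norm $\|v\|_{r\sigma}^r$. To close the iteration I also need to absorb the undifferentiated term $Y_r(t):=\|v(t)\|_r^r$ on the right-hand side into this dissipation. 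For this I would use a Gagliardo-Nirenberg interpolation of the form $\||v|^{r/2}\|_2 \le C\,\||v|^{r/2}\|^{\theta}\,\||v|^{r/2}\|_1^{1-\theta}$ for a suitable $\theta\in(0,1)$, together with Young's inequality, yielding for every $\epsilon>0$ the estimate $Y_r \le \epsilon\,\||v|^{r/2}\|^2 + C_\epsilon\,\|v\|_{r/2}^r$.

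Choosing $\epsilon$ proportional to $c_1 r^{-\theta_1}/(c_2 r^{\theta_2})$ and substituting back into \eqref{3.26} produces an inequality of the form
\[
\frac{d}{dt} Y_r(t) + c_3\, Y_r(t) \le K\,r^{\theta_3}\bigl(\|v(t)\|_{r/2}^r + 1\bigr),
\]
for some $c_3>0$ independent of $r$ and some $\theta_3$ depending linearly on $\theta_1$ and $\theta_2$. Applying the scalar Gronwall inequality and taking the supremum in $t\ge 0$ gives
\[
\sup_{t\ge 0} Y_r(t) \le \max\!\Bigl(\|v(0)\|_r^r,\ \tfrac{K}{c_3}\,r^{\theta_3}\bigl(\sup_{t\ge 0}\|v(t)\|_{r/2}^r + 1\bigr)\Bigr).
\]
Setting $r = 2^k$ and $A_k := \max\bigl(1,\sup_{t\ge 0}\|v(t)\|_{2^k}\bigr)^{2^k}$ turns this into a recursion $A_k \le D\,2^{k\theta_3}(A_{k-1} + 1)$, with base case at $k=1$ controlled by $\sup_t\|v\|_2$.

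Finally I would iterate the recursion, take $2^k$-th roots, and pass to the limit $k\to\infty$, using the convergence of the series $\sum_j 2^{-j}$ and $\sum_j j\,2^{-j}$. This yields $\|v(t)\|_\infty = \lim_{k\to\infty} A_k^{1/2^k}$ bounded by an expression of the advertised form; the initial contribution enters through $\|v(0)\|_{2^k}^{2^k} \le |\Omega|\,\|v_0\|_\infty^{2^k}$, placing the $\|v_0\|_\infty$ term inside the $\max$ defining $M_0$. The main obstacle will be the careful bookkeeping of constants and exponents at each iteration step to recover the stated factor $2^{\theta_2 + (\theta_1+\theta_2)d_2}$; in particular, one must verify that the telescoping factors $2^{k\theta_3}$ raised to the power $2^{-k}$ sum to a finite quantity and reproduce the claimed explicit dependence on $\theta_1$ and $\theta_2$.
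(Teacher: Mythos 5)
The paper itself gives no proof of this lemma (it is quoted from Nakao \cite{Na1}), so your proposal is to be measured against the standard Alikakos--Nakao iteration, which is indeed what you are reconstructing. Most of your ingredients are the right ones: the Gagliardo--Nirenberg inequality applied to $w=|v|^{r/2}$ to absorb $\|v\|_r^r$ into the dissipation at the cost of $\|v\|_{r/2}^r$, the choice $\epsilon\sim r^{-\theta_1-\theta_2}$ (which is where the powers $\theta_1+\theta_2$ enter the final exponent), Gronwall in time, the handling of the initial data via $\|v(0)\|_{2^k}^{2^k}\le|\Omega|\,\|v_0\|_\infty^{2^k}$, and the base case via $\sup_{t\ge0}\|v(t)\|_2$, which together reproduce the structure of $M_0$.

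There is, however, a genuine error at the heart of the iteration: your recursion is written as \emph{linear}, $A_k \le D\,2^{k\theta_3}(A_{k-1}+1)$, whereas it must be \emph{quadratic}. The right-hand side of your Gronwall estimate contains $\sup_{t\ge0}\|v\|_{r/2}^r=\bigl(\sup_{t\ge0}\|v\|_{r/2}^{r/2}\bigr)^2$, so with your definition $A_k=\max\bigl(1,\sup_{t\ge0}\|v(t)\|_{2^k}\bigr)^{2^k}$ the correct relation is $A_k \le D\,2^{k\theta_3}(A_{k-1}^2+1)\le 2D\,2^{k\theta_3}A_{k-1}^2$. This is not cosmetic: iterating the linear recursion gives $A_k \lesssim \bigl(\prod_{j=2}^{k}D2^{j\theta_3}\bigr)A_1+\cdots$, and since $2^{-k}\sum_{j=2}^{k}(\log D+j\theta_3\log 2)\to 0$, taking $2^k$-th roots would yield $\limsup_{k}A_k^{1/2^k}\le 1$, a ``bound'' independent of the solution, which is absurd; the step fails as written. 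In fact the convergent series $\sum_j 2^{-j}$ and $\sum_j j\,2^{-j}$ you invoke at the end are precisely what emerges from the \emph{quadratic} recursion after taking logarithms and dividing by $2^k$ (namely $2^{-k}\log A_k \le 2^{-(k-1)}\log A_{k-1}+2^{-k}\log(2D)+\theta_3 k\,2^{-k}\log 2$, then summing), so your final step is only consistent with the corrected recursion; the exponent doubling is the mechanism that makes Moser iteration work, and it must appear explicitly. A second, minor inconsistency: when $\theta_1>0$ you cannot get a damping constant $c_3$ independent of $r$ merely from $\||v|^{r/2}\|^2\ge\|v\|_r^r$; either keep $c_3=\tfrac{c_1}{2}r^{-\theta_1}$, in which case Gronwall produces $r^{\theta_1+\theta_3}$ in place of $r^{\theta_3}$ (harmless, it only changes $d_2$), or spend a second interpolation, which costs further powers of $r$. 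Neither option damages the stated form of the bound, but the bookkeeping as written is not self-consistent.
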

	In order to apply Lemma \(\ref{Moser}\), we deform \eqref{1.1} in the following way: 
	\begin{equation}\label{3.27}
		\partial_t u_1 - \Delta u_1 + u_1 = u_1 u_2 - b ~\! u_1 + u_1,
	\end{equation}
	\begin{equation}\label{3.28}
		\partial_t u_2 - \Delta u_2 + u_2 = a ~\! u_1 + u_2.
	\end{equation}
	Hereafter we employ the usual \(H^1\) norm \((\|\nabla v\|_2^2 + \|v\|_2^2)^{1/2}\) for \(u_1\) and \(u_2\).
	 Multiplying \(\eqref{3.27}\) by \(|u_1|^{r-2}u_1\) (\(r\ge 2\)) and using integration by parts, we obtain
	\begin{align*}
   & \frac{1}{r}\frac{d}{dt} \|u_1(t)\|_r^r + (r-1)\int_{\Omega}|\nabla u_1|^2 |u_1|^{r-2}~\! dx 
                   + \int_{\partial\Omega} |u_1|^r ~\! d\sigma + \|u_1(t)\|_r^r
\\[1.5mm]
   & = \int_{\Omega} u_1^r u_2 ~\! dx - b ~\! \|u_1(t)\|_r^r + \|u_1(t)\|_r^r.
	\end{align*}
 Hence we have 
	\begin{equation*}
     \frac{1}{r}\frac{d}{dt} \|u_1(t)\|_r^r 
        + (r-1)\int_{\Omega}|\nabla u_1|^2 |u_1|^{r-2} ~\! dx 
            + \|u_1(t)\|_r^r
		        \le  \int_{\Omega} |u_1|^r |u_2| ~\! dx + \|u_1(t)\|_r^r.
	\end{equation*} 
  Moreover we note 
	\begin{align*}
		(r-1)\int_{\Omega}|\nabla u_1|^2 |u_1|^{r-2} ~\!  dx + \|u_1(t)\|_r^r 
          & = \frac{4(r-1)}{r^2}\int_{\Omega} \bigl|\nabla |u|^{\frac{r}{2}} \bigr|^2 ~\!dx 
                 + \| ~\! |u_1(t)|^{\frac{r}{2}}\|_2^2
\\[1.5mm]
		 & \ge \frac{4(r-1)}{r^2} \|~\! |u_1(t)|^{\frac{r}{2}}\|^2,
	\end{align*}
	where we used the fact that \(r\ge2\) implies \(\frac{4(r-1)}{r^2}\in(0,1]\) to the last inequality.
  Hence we obtain 
	\begin{equation}\label{3.29}
	\frac{1}{r}\frac{d}{dt} \|u_1(t)\|_r^r + \frac{4(r-1)}{r^2} \||u_1(t)|^{\frac{r}{2}}\|^2
		  \le  \int_{\Omega} |u_1|^r |u_2| ~\!  dx + \|u_1(t)\|_r^r.
	\end{equation}
	By using H\"older's inequality, interpolation inequality, 
      Sobolev's embedding theorem and Young's inequality, we can get
	\begin{align*}
	 \int_{\Omega} |u_1|^r |u_2| ~\! dx 
          & \le \|u_1(t)\|^r_{\frac{3r}{2}} ~\!  \|u_2(t)\|_3
\\[1.5mm]
		  & \le \|u_1(t)\|_r^{\frac{r}{2}} ~\!  \|u_1(t)\|_{3r}^{\frac{r}{2}} ~\! \|u_2(t)\|_3
\\[1.5mm]
		  & \le \|u_2(t)\|_3 ~\!  \|u_1(t)\|_r^{\frac{r}{2}} ~\! \| ~\! |u_1(t)|^{\frac{r}{2}}\|_6
\\[1.5mm]
		  & \le C_{15} ~\! \|u_1(t)\|_r^{\frac{r}{2}} ~\!  \| ~\! |u_1(t)|^{\frac{r}{2}}\| 
\\[1.5mm]
		  & \le \frac{2(r-1)}{r^2} \| ~\! |u_1(t)|^{\frac{r}{2}}\|^2 + \frac{C_{15}^2 ~\! r^2}{8(r-1)}\|u_1(t)\|_r^r.
	\end{align*} 
 Since \(r\ge 2\), it is easy to see that \(\frac{r^2}{8(r-1)}\le r\). 
	Then, from these observations, \(\eqref{3.29}\) leads to 
	\begin{equation*}
	 \frac{1}{r}\frac{d}{dt} \|u_1(t)\|_r^r + \frac{2(r-1)}{r^2} \| ~\! |u_1(t)|^{\frac{r}{2}}\|^2
		 \le C_{15}^2 ~\! r ~\! \|u_1(t)\|_r^r + \|u_1(t)\|_r^r,
	\end{equation*} 
	that is,
	\begin{equation}\label{3.30}
		\frac{d}{dt} \|u_1(t)\|_r^r + \| ~\! |u_1(t)|^{\frac{r}{2}}\|^2
		    \le C_{16} ~\! r^2 \bigl( \|u_1(t)\|_r^r + 1 \bigr).
	\end{equation}
	Here we used the fact that \(1\le\frac{2(r-1)}{r}\) provided that \(r\ge2\).
	Then \(u_1(t)\) satisfies \(\eqref{3.26}\) with \(c_1=1\), \(c_2=C_{16}\), \(\theta_1=0\) and \(\theta_2=2\).
	Thus applying Lemma \(\ref{Moser}\) to \(\eqref{3.30}\), we see that there exists \(C_{17}>0\) such that
	\begin{equation}\label{3.31}
	\sup_{t\ge 0} \|u_1(t)\|_{\infty} \le C_{17}.
	\end{equation}
	Finally, applying the same argument as above for \(u_2(t)\), we have
	\begin{equation}\label{3.32}
	\frac{1}{r}\frac{d}{dt}\|u_2(t)\|_r^r + \frac{4(r-1)}{r^2} \| ~\! |u_2(t)|^{\frac{r}{2}}\|^2 
         \le a \int_{\Omega}u_1u_2^{r-1}dx + \|u_2(t)\|_r^r.
	\end{equation}
	 Since \(\frac{r-1}{r}\le 1\) and \(\frac{1}{r}\le 1\), due to \(\eqref{3.31}\) we can deduce
	\begin{align*}
	  a \int_{\Omega}u_1u_2^{r-1} ~\! dx 
          & \le a ~\! C_{17} ~\! \|u_2(t)\|_{r-1}^{r-1}
\\[1.5mm]
	      & \le a  ~\! C_{17} \Bigl\{ \frac{r-1}{r}\|u_2(t)\|_r^r + \frac{1}{r}|\Omega| \Bigr\}
\\[1.5mm]
	      & \le a ~\! C_{17} \Bigl( \|u_2(t)\|_r^r + |\Omega| \Bigr),
	\end{align*}
	which implies 
	\begin{equation*}
	\frac{1}{r}\frac{d}{dt}\|u_2(t)\|_r^r 
        + \frac{4(r-1)}{r^2} \| ~\! |u_2(t)|^{\frac{r}{2}}\|^2 
            \le C_{18} \Bigl( \|u_2(t)\|_r^r+ 1 \Bigr),
	\end{equation*}
	for some \(C_{18}>0\).
	   Since \(2\le\frac{4(r-1)}{r}\), we conclude that
	\begin{equation}\label{3.33}
	\frac{d}{dt}\|u_2(t)\|_r^r + 2 ~\! \| ~\! |u_2(t)|^{\frac{r}{2}}\|^2 
           \le C_{18} ~\! r \Bigl(\|u_2(t)\|_r^r + 1 \Bigr).
	\end{equation}
	Then we can apply Lemma \(\ref{Moser}\) to \(\eqref{3.33}\) 
       with \(c_1=2\), \(c_2=C_{18}\), \(\theta_1=0\) and \(\theta_2=1\).
         Thus there exists \(C_{19}>0\) such that
	\begin{equation}\label{3.34}
	\sup_{t\ge 0} \|u_2(t)\|_{\infty} \le C_{19}.
	\end{equation}
	These a priori bounds \(\eqref{3.31}\) and \(\eqref{3.34}\) complete the proof.
\begin{flushright}
$\Box$
\end{flushright}

\end{document}